\theoremstyle{thmstyleone}%
\newtheorem{theorem}{Theorem}
\newtheorem{lemma}[theorem]{Lemma}
\theoremstyle{thmstyletwo}%
\theoremstyle{thmstylethree}%
\newtheorem{definition}{Definition}%
\begin{document}

\title[Article Title]{Optimal Hypothesis Testing Based on Information Theory}


\author*[1]{\fnm{Dazhuan} \sur{Xu}}\email{xudazhuan@nuaa.edu.cn}

\author[2]{\fnm{Nan} \sur{Wang}}\email{n\_wang@nuaa.edu.cn}

\affil[1,2]{\orgdiv{College of Electronic and Information Engineering}, \orgname{Nanjing University of Aeronautics and Astronautics}, \orgaddress{\street{29 General Avenue, Jiangning District}, \city{Nanjing}, \postcode{210000}, \country{China}}}


\abstract{There has a major problem in the current theory of hypothesis testing in which no unified indicator to evaluate the goodness of various test methods since the cost function or utility function usually relies on the specific application scenario, resulting in no optimal hypothesis testing method. In this paper, the problem of optimal hypothesis testing is investigated based on information theory. We propose an information-theoretic framework of hypothesis testing consisting of five parts: \textit{test information} (TI) is proposed to evaluate the hypothesis testing, which depends on the \textit{a posteriori} probability distribution function of hypotheses and independent of specific test methods; \textit{accuracy} with the unit of $ bit $ is proposed to evaluate the degree of validity of specific test methods; the \textit{sampling \textit{a posteriori}} (SAP) probability test method is presented, which makes stochastic selections on the hypotheses according to the \textit{a posteriori} probability distribution of the hypotheses; the \textit{probability of test failure} is defined to reflect the probability of the failed decision is made; \textit{ test theorem} is proved that all accuracy lower than the TI is achievable.  Specifically, for every accuracy lower than TI, there exists a test method with the probability of test failure tending to zero. Conversely, there is no test method whose accuracy is more than TI. Numerical simulations are performed to demonstrate that the SAP test is asymptotically optimal. In addition, the results show that the accuracy of the SAP test and the existing test methods, such as the maximum \textit{a posteriori} probability,  expected \textit{a posteriori} probability, and median \textit{a posteriori} probability tests, are not more than TI.}

\keywords{Optimal hypothesis testing, information theoretic framework, test information, sampling \textit{a posteriori} probability test, test theorem}



\maketitle

\section{Problem of Optimal hypothesis testing}\label{sec1}

Bayesian statistical inference \cite{r1,r2} is an important technique in mathematical statistics in which Bayes? theorem\cite{r3} is used to update the probability for a hypothesis as more evidence or information becomes available. Bayesian methods allow the incorporation of \textit{a priori} knowledge into statistical inference, which is recognized as critical in practical applications, such as disease diagnosis and drug testing. In statistical inference, hypothesis testing is a major class of problems, and is usually handled using maximum a posteriori tests (MAP),  expected \textit{a posteriori} (EAP), and median \textit{a posteriori}  (MeAP)  test methods under the Bayesian framework. To introduce what we will discuss next, a classic example of a coin flip is given.

\subsection{Example: hypothesis testing problem in binomial distribution}

\subsubsection{Formulation problem}

In the case of a coin toss, assume that the probability of heads up is $ \theta $. If we toss the coin $ n \in \mathbb{N} $  ($ \mathbb{N}=\left\{ {1,2, \cdots ,N} \right\}   $)  times and $ k $  times it comes up heads. The probability distribution function(PDF) of $ k $ is given by
\begin{equation}
	P\left( {k} \right) = C_n^k{\theta ^k}{\left( {1 - \theta } \right)^{n - k}}.\label{1}
\end{equation} 
Now,  the coin toss event is transformed into a hypothesis testing problem for the test of $ n $. That is, make statistical inferences regarding hypotheses $ H_{0}: n=1 $, $ H_{1}: n=2 $,  \ldots, $ H_{\rm{N}}: n=N $. Then, Eq. (\ref{1}) is rewritten as 
\begin{equation}
	P\left( {k|n} \right) = C_n^k{\theta ^k}{\left( {1 - \theta } \right)^{n - k}}.\label{2}
\end{equation} 
In the framework of Bayesian statistics, we first give the $ a priori $ distribution of $ n $. Suppose $ n $  obeys the distribution 
\begin{equation}
	\pi \left( n \right) = \frac{1}{N}.\label{3}
\end{equation}
Then, by Bayesian formulation, the \textit{a posteriori} PDF is given by
\begin{equation}
	P\left( {n|k} \right) = \frac{{\pi \left( n \right)C_n^k{\theta ^k}{{\left( {1 - \theta } \right)}^{n - k}}}}{{\sum\limits_{n = 1}^M {\pi \left( n \right)C_n^k{\theta ^k}{{\left( {1 - \theta } \right)}^{n - k}}} }} = \frac{{C_n^k{\theta ^k}{{\left( {1 - \theta } \right)}^{n - k}}}}{{\sum\limits_{n = 1}^M {C_n^k{\theta ^k}{{\left( {1 - \theta } \right)}^{n - k}}} }}.\label{4}
\end{equation}

\subsubsection{Existing test methods}
The MAP test and the EAP test are commonly used hypothesis testing methods. The MAP test is to select the hypothesis with the highest \textit{a posteriori} probability, i.e.,
\begin{equation}
	{\hat n_{{\rm{MAP}}}} = \arg \mathop {{\rm{max}}}\limits_n P(n|k).\label{5}
\end{equation}
The EAP test is to select the hypothesis that is closest to the expectation of the \textit{a posteriori} probability, i.e., 
\begin{equation}
	{\hat n_{{\rm{EAP}}}} = \arg \mathop {{\rm{min}}}\limits_n \left| n-E\left[ P(n|k) \right]  \right| .\label{6}
\end{equation}
The MeAP test is used to find the hypothesis that makes the accumulated value of the \textit{a posteriori} probabilities is closest to $ \dfrac{1}{2} $, i.e.,
\begin{equation}
	{{\hat n}_{{\rm{MeAP}}}} = \arg \mathop {\min }\limits_n \left| {\sum\limits_{n = 1}^N {P\left( {n\left| k \right.} \right)}  - \frac{1}{2}} \right|.
\end{equation}

The results of the MAP, EAP, and MeAP tests for given $ k $ are provided in Fig. 1.
\begin{figure}
	\includegraphics[width=5in]{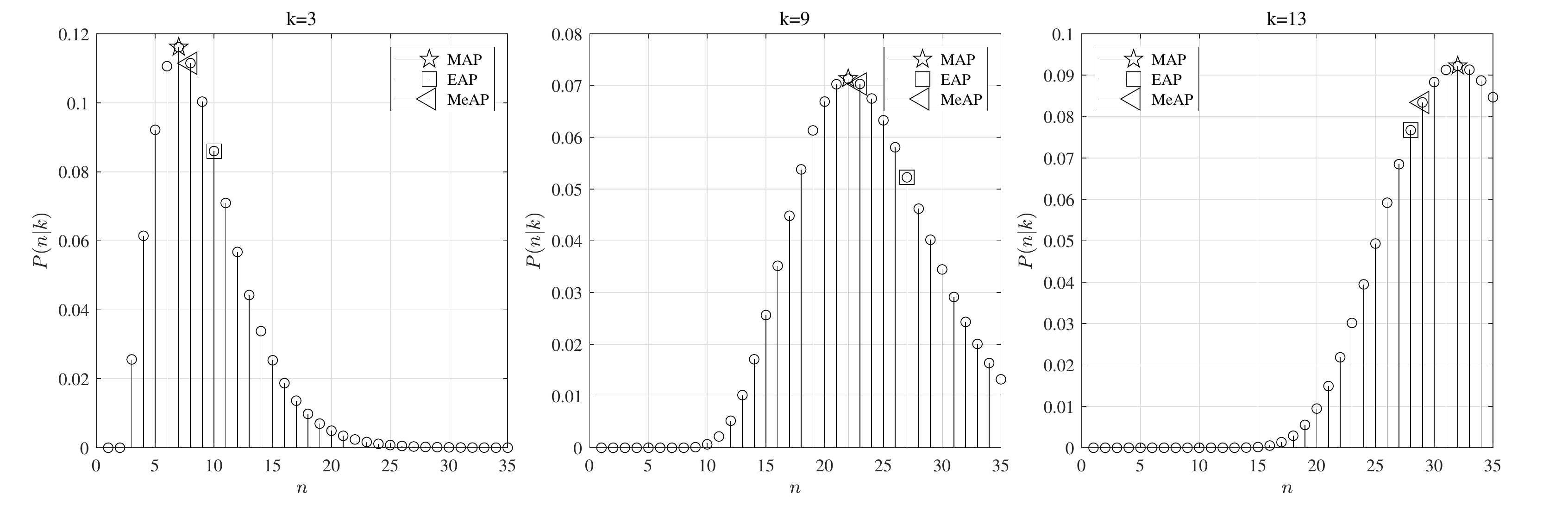}
	\caption{The \textit{ a posteriori} PDF of $ n $ and the results of MAP, EAP, and MeAP tests for given $ N=35 $, $ \theta=0.4 $ and $ k=3,9,13 $.}
	\label{penG}
\end{figure}

It can be seen that for $ k $ equal to $ 3 $, the results of MAP, EAP, and MeAP tests are hypotheses $ n=7 $, $ n=10$, and $ n=8 $, respectively. Similarly, for $ k=9 $, the MAP, EAP, and MeAP tests correspond to the hypothesis $ n=22 $, $ n=27 $, and $ n=23 $, respectively; for $ k=13 $, the MAP, EAP, and MeAP tests correspond to the hypothesis $ n=32 $, $ n=28 $, and $ n=29 $, respectively. 

\subsubsection{The evaluation of the results}
Bayesian hypothesis testing is usually evaluated by the probability of error, 
\begin{equation}
	{P_{e}} = \rm{Pr} ( \hat n \ne n ). 
\end{equation}
According to the rules of the MAP, EAP, and MeAP tests, the average probability of error of the MAP is smaller than that of the the EAP and MeAP tests. 

In addition to the error probability, a cost function or utility function can be established according to the specific application scenario to find the hypothesis that minimizes the loss or maximizes the utility.

\subsection{Problem and Challenges}
From the example, we find the following problems to be solved in hypothesis testing.  Firstly, there is no unified indicator to evaluate the goodness of various test methods since the cost function (probability of error) or utility function usually depends on the specific application scenario, resulting in no optimal hypothesis testing method.  Given the issue, hypothesis testing faces the following challenges: whether there is a general indicator independent of the specific applications; whether there is an optimal test method; and whether the general indicator is achievable.

\subsection{Contributions}
In this paper, the information theoretical framework for hypothesis testing is presented based on information theory \cite{r4}. Test information (TI) is defined as the difference between the \textit{a priori} and the \textit{a posteriori} entropies of hypotheses, which depends on the \textit{a posteriori} probability of hypotheses and is independent of specific test methods. We propose the  accuracy to evaluate specific test methods. We propose the sampling \textit{a posteriori} (SAP) test, which makes selections on the hypotheses according to the \textit{a posteriori} probability distribution of the hypotheses.  The probability of test failure is defined to reflect the probability of the failed decision is made. The test theorem is proved based on the SAP test and the Asymptotic Equipartition Property (AEP), stating that all accuracy lower than the TI are achievable.  Specifically, for every accuracy lower than TI, there exists a test method with the probability of test failure tending to zero. Conversely,  there is no test method whose accuracy is more than TI. The proof is inspired by the coding theorem and TI is analogous to Shannon's capacity. Numerical simulations are performed to demonstrate that the accuracy of the SAP test approaches the TI, hence the SAP test is asymptotically optimal. In addition, we compare the TI to the accuracy of the MAP,  EAP, MeAP, and SAP tests. The result shows that the accuracy of the these test methods are not more than TI.

\subsection{Organization}
The rest of the paper is organized as follows. In Section 2, the information theoretic framework of  hypothesis testing is proposed. The simulation is provide in Section 3, and Section 4 concludes this paper.

\section{Information Theoretic framework of  hypothesis testing}\label{sec2}
The uniqueness of information theory is that it can measure the amount of information a variable receives from another variable. Hypothesis testing is essentially a process of measuring uncertainty reduction of a hypothesis. Therefore, it is reasonable to apply information theory to hypothesis testing. This section provides an information-theoretic framework for hypothesis testing.

\subsection{Test Information}
A general hypothesis testing system model $\left( {\mathbb{X}, \pi (x), p(y|x), {\bf\mathbb{Y}} , \hat x = t(\cdot)} \right)$ is shown in Fig. 2.
\begin{figure*}[!ht]
	\centering
	\includegraphics[width=2.7in]{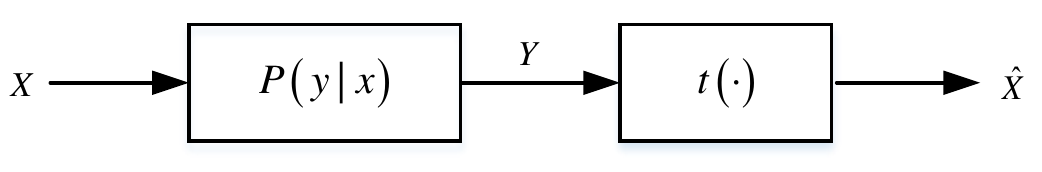}
	\begin{center}
		\caption{Hypothesis test model}
	\end{center}
	\label{f1}
\end{figure*}
$\mathbb{X} \left(  X \!\in\! \bf\mathbb{X}\right)  $ is an input set;  $ {\bf\mathbb{Y}}\left(  Y \!\in\! \bf\mathbb{Y}\right)  $ is a vector space in the complex regime; $\hat X = t(\cdot)$ is  a test that is a function of the observed data $Y $, and $ P(y|x) $ is the conditional PDF.  The \textit{a posteriori} PDF $  P\left( x|y \right) $ is used to test hypotheses since all these statistical properties are contained in the  $  P\left( x|y \right) $. Based on the model of hypothesis testing, the definition of TI is given.
\begin{definition}[Test Information] \label{2.1}
	The mutual information $I\left( {Y;X} \right) $ between the state variable $ x $ and the $ y $ is called the TI,
	\begin{equation}
		I\left( {Y;X} \right) = {\rm{E}}\left[ {\log \frac{{P\left( {\left. {y} \right|x} \right)}}{{P\left( {y} \right)}}} \right].
	\end{equation}	
\end{definition}

The definition of the TI is rewritten as 
\begin{equation}
	I\left( {Y;X} \right) = H\left( X \right) - H\left( {X|Y} \right),
\end{equation}
where 
\begin{equation}
	H\left( X \right) =  - \sum\limits_{i = 1}^m {\pi ({x_i})\log \pi ({x_i})} 
\end{equation}
is the \textit{a priori}  entropy and
\begin{equation}
	H\left( {X|Y} \right) =  - \sum\limits_{i = 1}^m {P({x_i}|{y_i})\log P({x_i}|{y_i})} 
\end{equation}
is the \textit{a posteriori} entropy.

TI is a theoretical indicator for quantifying the goodness of the test's results, which is independent of any test method. The more TI there is, the better the test result will be.

\subsection{Sampling A Posteriori Probability Test}

In addition to the three hypothesis testing methods including the MAP, EAP, and MeAP tests, the test problem can be also addressed by the random sampling method.  In general, sampling refers to the random selection of $ n $ independent and identically distributed samples from the total population. Consider that the properties of the hypothesis are embodied by the \textit{a posteriori} PDF $  P\left( {x\left| {y} \right.} \right) $ based on the Bayes formula, we propose the SAP test based on $  P\left( {x\left| {y} \right.} \right) $. Specifically, the test results of the SAP test satisfy
\begin{equation}
	{\hat x_{{\rm{SAP}}}} = \arg \mathop {\rm{smp}}\limits_x P\left( {x\left| {y} \right.} \right),
\end{equation}	where $  \mathop {\rm{smp}}\limits_x \left\{  \cdot  \right\} $ denotes the sampling operator, which selects the hypotheses according to the \textit{a posteriori} probabilities. To understand the SAP test more intuitively,  the sampling results of the MAP, EAP, MeAP, and SAP tests in the introduction are shown in Fig. 3.

\begin{figure}
	\includegraphics[width=5in]{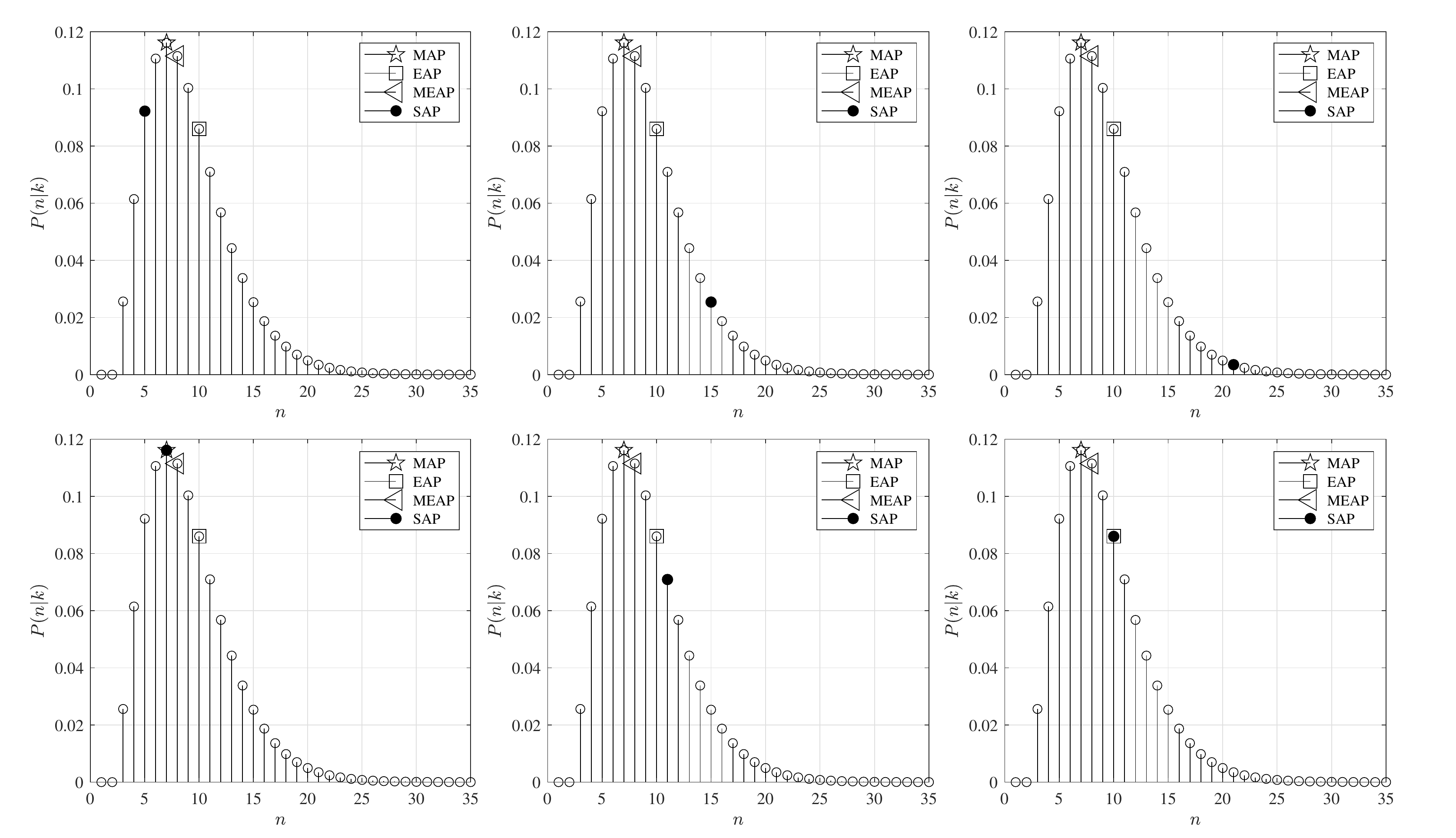}
	\caption{The \textit{ a posteriori} PDF of $ n $ and the results of MAP, EAP, MeAP, and SAP tests for a given $ k $ ($ \theta =0.4 $) .}
	\label{penG}
\end{figure}

As can be seen, for the given $ k =3$,  the results of the MAP, EAP,  and MeAP tests are fixed, i.e., $ n=7 $, $ n=10 $, and $ n=8 $.  On the contrary, the SAP test selects the hypothesis based on the overall probability distribution $ P\left( {n\left| {k} \right.} \right)  $. As a result, the hypotheses can be selected stochastically according to its \textit{a posteriori} probability.

\subsection{Accuracy}


The accuracy of a specific test depends on its empirical entropy, i.e., the empirical \textit{a posteriori} PDF. According to the introduction of the SAP test in the previous section, it is known that its empirical \textit{a posteriori} PDF is the theoretical \textit{a posteriori} PDF. Whereas the empirical \textit{a posteriori} PDFs of MAP, EAP, and MeAP test methods can be obtained statistically by corresponding specific rules, respectively. The specific definition of accuracy we will give in the next subsection.

\subsection{Test Theorem}
In this section, we prove the test theorem from the achievability and the converse results. The proof of the theorem is inspired by Shannon's coding theorem. However, hypothesis testing differs from the coding theorem in that all possibilities for the true hypotheses (corresponding to the code book in coding theorem) are not known. Therefore, the proof given below uses the properties of the typical set, whose basic idea is the random test. 

\begin{figure*}[!ht]
	\centering
	\includegraphics[width=2.7in]{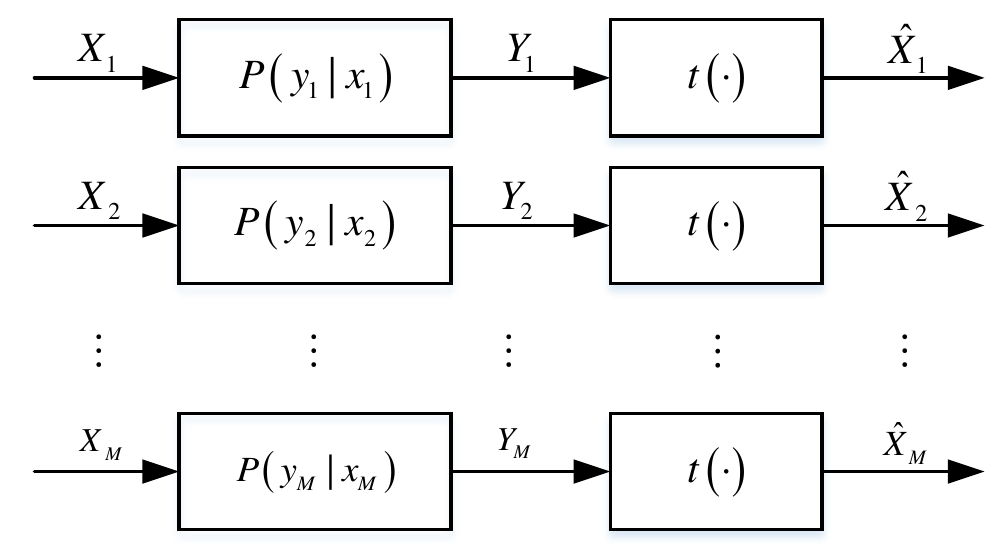}
	\begin{center}
		\caption{Hypothesis testing model for $ M$-th extension}
	\end{center}
	\label{f1}
\end{figure*}
Consider a $ M $ extension of hypothesis test system is denoted as $\left( {\mathbb{X}^{M},\pi (x^{M}),P({y}^{M}|x^{M}),\mathbb{Y}^{M}} \right) $ as shown in Fig. 4, where $ \pi (x^{M}) $ denotes the \textit{a priori} distribution of the hypothesis $ X^{M} $, $ P({y}^{M}|x^{M}) $ represents the conditional PDF, $ \mathbb{X}^{M} $ and $ \mathbb{Y}^{M} $ are data space of the hypothesis $ X^{M}$  and the observed data $ Y^{M}$, respectively. The observed data is gained through $ \pi(x) $ and  $ P({y}|x)  $. Then the \textit{a posteriori} PDF $ P\left( {x\left| {y} \right.} \right)  $ is obtained and a test $ t(\cdot) $ is performed to make a decision $ {\hat{X}} $. It can be seen that $\left( {{X^M},Y^{M},{{\hat X}^M}} \right) $  forms a Markov chain. $  X^M $ and  $  P({{y}^M}|{x^M}) $ satisfy 

\begin{equation}
	\pi \left( {{x^M}} \right) = \prod\limits_{m= 1}^M {\pi \left( {{x_{m}}} \right)} ,
\end{equation}
\begin{equation}
	P({{y}^M}|{x^M}) = \prod\limits_{m= 1}^M {P({{y}_{m}}|{x_{m}})} . 
\end{equation}

Before proving the theorem, we introduce the definitions and lemmas required for developing the test theorem.

\begin{lemma}[Chebyshev Law of Large Numbers]\label{2.2}
	If the random sequence ${Z_1},{Z_2}, \cdots {Z_M} $ are independent identically distributed (i.i.d.) with mean $ \mu  $ and variance ${\sigma ^2} $, where the sample mean is ${\bar Z_M} =\displaystyle \frac{1}{M}\sum\nolimits_{m = 1}^M {{Z_m}} $, then
	\begin{equation}
		\Pr \left\{ {\left| {{{\bar Z}_M} - \mu } \right| > \varepsilon } \right\} \le \frac{{{\sigma ^2}}}{{M{\varepsilon ^2}}}.
	\end{equation}	
\end{lemma}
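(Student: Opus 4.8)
The plan is to derive the bound as an immediate consequence of the second‑moment (Chebyshev) inequality applied to the sample mean $\bar{Z}_M$.

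First I would compute the first two moments of $\bar{Z}_M$. By linearity of expectation,
\begin{equation}
	{\rm E}\left[\bar{Z}_M\right] = \frac{1}{M}\sum_{m=1}^M {\rm E}\left[Z_m\right] = \mu .
\end{equation}
Since the $Z_m$ are i.i.d.\ (in particular pairwise uncorrelated), the variance of a sum is the sum of the variances, hence
\begin{equation}
	{\rm Var}\left(\bar{Z}_M\right) = \frac{1}{M^2}\sum_{m=1}^M {\rm Var}\left(Z_m\right) = \frac{\sigma^2}{M} .
\end{equation}

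Second, I would apply Markov's inequality to the nonnegative random variable $\left(\bar{Z}_M - \mu\right)^2$: for any $\varepsilon > 0$,
\begin{equation}
	\Pr\left\{ \left|\bar{Z}_M - \mu\right| > \varepsilon \right\} = \Pr\left\{ \left(\bar{Z}_M - \mu\right)^2 > \varepsilon^2 \right\} \le \frac{{\rm E}\left[\left(\bar{Z}_M - \mu\right)^2\right]}{\varepsilon^2} .
\end{equation}
The numerator on the right equals ${\rm Var}\left(\bar{Z}_M\right) = \sigma^2/M$ by the first step, which gives the claimed bound $\sigma^2/\left(M\varepsilon^2\right)$. If a fully self‑contained argument is wanted, I would precede this with the one‑line justification of Markov's inequality obtained by taking expectations in the pointwise estimate $\varepsilon^2 \mathbf{1}\left\{\left(\bar{Z}_M - \mu\right)^2 > \varepsilon^2\right\} \le \left(\bar{Z}_M - \mu\right)^2$.

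Since this is a classical fact, there is no genuine obstacle; the only points deserving a line of care are that the i.i.d.\ hypothesis makes all cross‑covariance terms vanish in the variance split (so only pairwise uncorrelatedness is actually used), and that the passage between the strict inequality $>\varepsilon$ and the Markov bound is harmless for the stated estimate. I would therefore keep the write‑up to the three short steps above.
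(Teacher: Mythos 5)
Your proof is correct and is the standard argument: compute ${\rm E}[\bar Z_M]=\mu$ and ${\rm Var}(\bar Z_M)=\sigma^2/M$ using only pairwise uncorrelatedness, then apply Markov's inequality to $(\bar Z_M-\mu)^2$. The paper states this lemma as a classical fact with no proof of its own, so there is nothing to compare against; your three-step write-up (including the remark that the strict inequality in the event causes no trouble) is exactly what a self-contained treatment would supply.
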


\begin{lemma}[AEP] \label{2.3}
	If the random sequence ${X_1},{X_2}, \cdots {X_M} $ are i.i.d. $~ \pi\left(x \right)  $, then
	\begin{equation}
		- \frac{1}{M}\log \pi ({X_1},{X_2}, \cdots ,{X_M}) \to {\rm{E}} \left[ { - \log \pi (x)} \right] = H(X) \quad  in \ probability. 
	\end{equation}
\end{lemma}

\begin{definition}[Typical Set] \label{2.4}
	If the random sequence ${X_1},{X_2}, \cdots {X_M} $ are i.i.d. ${\sim} \pi \left( x \right) $, for any $\varepsilon  > 0$, the typical set is defined as
	\begin{equation}
		\mathbb{A} _\varepsilon ^{(M)}(X) = \left\{ {\left( {{X_1},{X_2}, \cdots ,{X_M}} \right) \in {X^M}:\left| { - \frac{1}{M}\log \pi \left( {{X_1},{X_2}, \cdots ,{X_M}} \right) - H(X)} \right| < \varepsilon } \right\}.
	\end{equation}where $\pi \left( {{X_1},{X_2}, \cdots ,{X_M}} \right) = \prod\limits_{m = 1}^M {\pi ({x_m})}  $
\end{definition}
As a result of the AEP, the typical set  $ \mathbb{A} _\varepsilon ^{(M)}(X) $ has the following properties:
\begin{lemma}\label{2.5}
	For any $\varepsilon  > 0$, when $ M $ is sufficiently large, there is
	
	$ \left( 1\right)  $ 
	$ \Pr \left\{ {X \in \mathbb{A}_\varepsilon ^{(M)}(X)} \right\} > 1 - \varepsilon  $
	
	$ \left( 2\right)  $ 
	${2^{ - M\left[ {H(X) + \varepsilon } \right]}} < \pi (x) < {2^{ - M\left[ {H(X) - \varepsilon } \right]}} $
	
	$ \left( 3\right)  $ 
	$(1 - \varepsilon ){{\rm{2}}^{M(H(X) + \varepsilon )}}{\rm{ < }}\left\| {\mathbb{A} _\varepsilon ^{(M)}(X)} \right\| < {{\rm{2}}^{M(H(X) + \varepsilon )}}$, where $ \left\| {\mathbb{A} _\varepsilon ^{(M)}(X)} \right\|  $ denotes the cardinal number of the set $\mathbb{A} _\varepsilon ^{(M)}(X) $, i.e., the number of elements in the set $\mathbb{A} _\varepsilon ^{(M)} (X)$.
\end{lemma}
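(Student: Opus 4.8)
The plan is to dispatch the three items in order, since each later one reuses the earlier bounds; all three are the standard consequences of the AEP (Lemma \ref{2.3}), so the only real work is keeping the inequalities pointed in the right directions. First I would prove $(1)$. The event defining $\mathbb{A}_\varepsilon^{(M)}(X)$ in Definition \ref{2.4} is exactly $\left\{\left|-\frac{1}{M}\log\pi(X_1,X_2,\cdots,X_M)-H(X)\right|<\varepsilon\right\}$. By Lemma \ref{2.3} the quantity $-\frac{1}{M}\log\pi(X_1,X_2,\cdots,X_M)$ converges to $H(X)$ in probability, so by the very definition of convergence in probability this event carries probability larger than $1-\varepsilon$ once $M$ is sufficiently large, which is precisely $\Pr\{X\in\mathbb{A}_\varepsilon^{(M)}(X)\}>1-\varepsilon$.

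Item $(2)$ is a direct unfolding of the definition: if $x=(x_1,x_2,\cdots,x_M)$ is typical then $\left|-\frac{1}{M}\log\pi(x)-H(X)\right|<\varepsilon$, and dropping the absolute value and multiplying by $-M$ gives
\begin{equation}
	-M[H(X)+\varepsilon]<\log\pi(x)<-M[H(X)-\varepsilon].
\end{equation}
Exponentiating base $2$ yields the stated $2^{-M[H(X)+\varepsilon]}<\pi(x)<2^{-M[H(X)-\varepsilon]}$.

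For $(3)$ I would run a two-sided counting argument, and this is where the care is required. The upper bound follows from total probability paired with the lower bound on $\pi(x)$ from $(2)$: since $\sum_{x\in\mathbb{A}_\varepsilon^{(M)}(X)}\pi(x)\le 1$ while each typical $x$ obeys $\pi(x)>2^{-M[H(X)+\varepsilon]}$, we obtain $1>\|\mathbb{A}_\varepsilon^{(M)}(X)\|\,2^{-M[H(X)+\varepsilon]}$, i.e. $\|\mathbb{A}_\varepsilon^{(M)}(X)\|<2^{M[H(X)+\varepsilon]}$. For the lower bound I would instead combine $(1)$ with the \emph{upper} bound on $\pi(x)$ from $(2)$:
\begin{equation}
	1-\varepsilon<\sum_{x\in\mathbb{A}_\varepsilon^{(M)}(X)}\pi(x)<\|\mathbb{A}_\varepsilon^{(M)}(X)\|\,2^{-M[H(X)-\varepsilon]},
\end{equation}
which rearranges to $\|\mathbb{A}_\varepsilon^{(M)}(X)\|>(1-\varepsilon)\,2^{M[H(X)-\varepsilon]}$.

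The genuine obstacle is exactly this pairing: the lower bound on $\pi(x)$ drives the upper bound on the cardinality, while the upper bound on $\pi(x)$ drives the lower bound on the cardinality. Carried out honestly, the counting argument produces a lower bound with exponent $H(X)-\varepsilon$, not the $H(X)+\varepsilon$ printed in the lemma. The $+\varepsilon$ version cannot hold in general: it would confine $\|\mathbb{A}_\varepsilon^{(M)}(X)\|$ to the narrow band between $(1-\varepsilon)2^{M[H(X)+\varepsilon]}$ and $2^{M[H(X)+\varepsilon]}$, and for a uniform source, where every sequence is typical and $\|\mathbb{A}_\varepsilon^{(M)}(X)\|=2^{MH(X)}$, the requirement $2^{MH(X)}>(1-\varepsilon)2^{M[H(X)+\varepsilon]}$ reduces to $1>(1-\varepsilon)2^{M\varepsilon}$, which fails for large $M$. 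I would therefore prove $(3)$ with the exponent $H(X)-\varepsilon$ in the lower bound, treating the printed $+\varepsilon$ as a typographical slip.
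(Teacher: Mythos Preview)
Your proof is correct and is in fact the standard argument from Cover and Thomas, \emph{Elements of Information Theory}, pp.~51--53, which is exactly what the paper cites in lieu of its own proof. So there is no difference in approach: the paper defers to the reference, and you have reproduced the reference's proof.

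Your observation about part $(3)$ is also right. The lower bound as printed, $(1-\varepsilon)2^{M(H(X)+\varepsilon)}$, is a typographical error in the paper; the cited source states it with exponent $H(X)-\varepsilon$, and your uniform-source counterexample confirms that the $+\varepsilon$ version is false. The later uses of this lemma in the paper (e.g., in the proof of the test theorem) are consistent with the correct $H(X)-\varepsilon$ form, so the slip is localized to the statement of the lemma.
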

\noindent\textit{Proof: see (\cite{r4}, pp. 51-53).}

\begin{definition}[Jointly Typical Sequences]\label{2.6}
	The set $ \mathbb{A}_\varepsilon ^{\left( M \right)}(X,Y)$ of jointly typical sequences $\left( {{X^M},{Y^M}} \right)$ with respect to the distribution $P(x,y)$ is the set of $ M $-sequences with an empirical entropies $ \varepsilon $ close to the true entropies, i.e., 
	\begin{equation}
		\begin{split}
			\begin{array}{l}
				\mathbb{A}_\varepsilon ^{\left( M \right)} (X,Y)= \Bigg\{ \left( {{x^M},{{y}^M}} \right) \in {\mathbb{X}^M}\! \times {\mathbb{Y}^M}:\\
				\left| { - \displaystyle\frac{1}{M}\sum\limits_{m = 1}^M {\log P\left( {{x_{m}}} \right)}  - H\left( X \right)} \right| < \varepsilon \\
				\left| { - \displaystyle \frac{1}{M}\sum\limits_{m = 1}^M {\log P\left( {{{y}_{m}}} \right)}  - H\left( Y \right)} \right| < \varepsilon \\
				\left| { - \displaystyle \frac{1}{M}\sum\limits_{m = 1}^M {\log P\left( {{x_{m}},{{y}_{m}}} \right)}  - H\left( {X,Y} \right)} \right| < \varepsilon \Bigg\} ,
			\end{array}
		\end{split}
	\end{equation}
\end{definition} 
\noindent where
\begin{equation}
	P({{y}^M},{x^M}) = \prod\limits_{m = 1}^M {P({{y}_{m}},{x_{m}})} . 
\end{equation}

\begin{lemma}\label{2.7}	
	The typical set is used to test  $ { X^M} $. The extended \textit{a posteriori} PDF of the SAP test is $ {P_{{{ \rm SAP}}}}\left( {{{\hat x}^M}|{{y}^M}} \right) = P\left( {{{\hat x}^M}|{{y}^M}} \right)$  because of the extensions independent of each other. Then the joint \textit{a posteriori} PDF of the SAP test satisfies 
	\begin{equation}
		\begin{array}{c}
			{P_{{\rm{SAP}}}}\left( {{{\hat x}^M},{{y}^M}} \right) = P\left( {{{y}^M}} \right){P_{\rm SAP}}\left( {{{\hat x}^M}|{{y}^M}} \right)
			=P\left( {{{y}^M}} \right)P\left( {{{\hat x}^M}|{{y}^M}} \right)
			= P\left( {{{\hat x}^M},{{y}^M}} \right).
		\end{array} 
	\end{equation}
\end{lemma}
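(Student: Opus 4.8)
The plan is to verify the displayed chain of equalities step by step, using only the definition of the SAP test and the product structure of the $M$-extension; the lemma is essentially a bookkeeping statement, so the work is in being explicit about the randomization.

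First I would recall the definition of the SAP test: given an observation $y$, the decision $\hat{X}$ is drawn at random according to the \textit{a posteriori} distribution $P(\cdot\,|y)$, i.e. $\hat{x}_{\rm SAP} = \arg\,\mathop{\rm smp}\limits_x P(x|y)$. By the very meaning of the sampling operator, the conditional law of the SAP output for a single stage satisfies $P_{\rm SAP}(\hat{x}|y) = P(\hat{x}|y)$. This is the only genuinely non-trivial ingredient, and it is baked into the construction of the test rather than proved.

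Next I would pass to the $M$-extension. From the product forms $\pi(x^M)=\prod_{m=1}^{M}\pi(x_m)$ and $P(y^M|x^M)=\prod_{m=1}^{M}P(y_m|x_m)$, Bayes' rule gives that the extended \textit{a posteriori} PDF factorizes as well, $P(x^M|y^M)=\prod_{m=1}^{M}P(x_m|y_m)$. Since the $M$ stages of the test are performed independently --- each stage drawing $\hat{x}_m$ from $P(\cdot\,|y_m)$ independently of the other stages --- the extended SAP conditional law is the product of the single-stage ones, so $P_{\rm SAP}(\hat{x}^M|y^M)=\prod_{m=1}^{M}P_{\rm SAP}(\hat{x}_m|y_m)=\prod_{m=1}^{M}P(\hat{x}_m|y_m)=P(\hat{x}^M|y^M)$. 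This is the middle equality of the display.

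Finally I would assemble the joint law. By the Markov structure $X^M \to Y^M \to \hat{X}^M$, the SAP output depends on the data only through $Y^M$, whose marginal law is the true marginal $P(y^M)$ induced by $\pi$ and $P(y|x)$; hence $P_{\rm SAP}(\hat{x}^M,y^M)=P(y^M)\,P_{\rm SAP}(\hat{x}^M|y^M)$. Substituting the result of the previous step gives $P(y^M)\,P(\hat{x}^M|y^M)=P(\hat{x}^M,y^M)$, the true joint PDF, completing the chain. I expect the only point requiring care is to state explicitly that the randomization in the SAP test is independent across the $M$ extensions and, conditioned on $y^M$, independent of everything else; once this is pinned down the lemma is immediate, and --- importantly for the sequel --- it shows that $(\hat{X}^M,Y^M)$ produced by the SAP test has exactly the same joint distribution as the original $(X^M,Y^M)$, which is precisely what will let us apply the jointly-typical-set and AEP machinery to the SAP test in the proof of the test theorem.
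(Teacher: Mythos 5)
Your proposal is correct and follows essentially the same route as the paper, which states the lemma with only the brief justification that the extensions (and hence the per-stage samplings) are independent and that the SAP conditional law equals the true \textit{a posteriori} PDF by construction; you simply make the factorization $P_{\rm SAP}(\hat{x}^M|y^M)=\prod_{m=1}^{M}P(\hat{x}_m|y_m)=P(\hat{x}^M|y^M)$ and the multiplication by $P(y^M)$ explicit. Your closing observation that $(\hat{X}^M,Y^M)$ then has the same joint law as $(X^M,Y^M)$ is exactly the point the paper needs in order to apply the joint AEP to the SAP test.
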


\begin{lemma}[Joint AEP]\label{2.8}
	Let the sequence $ ({X^M},{Y^M}) $ are i.i.d.  $ {\sim}  P({x^M},{y}^M) = \prod\limits_{m = 1}^M {P({x_m},{y}_m)}  $, for any $\varepsilon  > 0$, if $ M $  is sufficiently large, then
	
	$ \left( 1\right)  $ 
	$\Pr \left\{ {({X^M},{Y^M}) \in \mathbb{A}_\varepsilon ^{\left( M \right)}\left( {X,Y} \right)} \right\} \ge 1 - \varepsilon  $
	
	$ \left( 2\right)  $ 
	$ {2^{ - M\left[ {H(X,Y) + \varepsilon } \right]}} < P(x,{y}) < {2^{ - M\left[ {H(X,Y) - \varepsilon } \right]}}  $
\end{lemma}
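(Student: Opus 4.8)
The plan is to establish the two claims separately, in the same spirit as the single-variable typical-set properties of Lemma~\ref{2.5}.

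For part~(1), I would apply the law of large numbers (Lemma~\ref{2.2}, equivalently the AEP of Lemma~\ref{2.3}) to each of the three empirical averages appearing in Definition~\ref{2.6}. Concretely, set $U_m=-\log P(x_m)$, $V_m=-\log P(y_m)$ and $W_m=-\log P(x_m,y_m)$; since the pairs $(X_m,Y_m)$ are i.i.d., each of $\{U_m\}$, $\{V_m\}$, $\{W_m\}$ is an i.i.d. sequence, with (finite) means $H(X)$, $H(Y)$ and $H(X,Y)$ respectively. Hence $-\frac{1}{M}\sum_{m=1}^M\log P(x_m)\to H(X)$, $-\frac{1}{M}\sum_{m=1}^M\log P(y_m)\to H(Y)$ and $-\frac{1}{M}\sum_{m=1}^M\log P(x_m,y_m)\to H(X,Y)$, all in probability. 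Fixing $\varepsilon>0$, there is therefore an $M_0$ so that for all $M\ge M_0$ each of the three events ``one of these three averages deviates from its mean by $\varepsilon$ or more'' has probability strictly below $\varepsilon/3$. A union bound then shows that the probability that at least one of them occurs is below $\varepsilon$; its complement is exactly $\{(X^M,Y^M)\in\mathbb{A}_\varepsilon^{(M)}(X,Y)\}$, so $\Pr\{(X^M,Y^M)\in\mathbb{A}_\varepsilon^{(M)}(X,Y)\}\ge 1-\varepsilon$.

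For part~(2), I would simply unpack the third defining inequality of $\mathbb{A}_\varepsilon^{(M)}(X,Y)$. If $(x^M,y^M)\in\mathbb{A}_\varepsilon^{(M)}(X,Y)$ then $\bigl|-\tfrac{1}{M}\log P(x^M,y^M)-H(X,Y)\bigr|<\varepsilon$, where $P(x^M,y^M)=\prod_{m=1}^M P(x_m,y_m)$; rearranging gives $H(X,Y)-\varepsilon<-\tfrac{1}{M}\log P(x^M,y^M)<H(X,Y)+\varepsilon$, and exponentiating base $2$ yields $2^{-M[H(X,Y)+\varepsilon]}<P(x^M,y^M)<2^{-M[H(X,Y)-\varepsilon]}$, which is the asserted bound (the symbol $P(x,y)$ in the statement standing for the joint sequence PDF $P(x^M,y^M)$ of a jointly typical pair).

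I do not anticipate a genuine obstacle here: this is the classical joint AEP, and once the single-variable results of Lemmas~\ref{2.2}--\ref{2.3} are in hand both parts are short. The only points requiring care are the bookkeeping in the union bound --- splitting the tolerance as $\varepsilon/3$ across the three events (any fixed split works) and taking $M$ past the largest of the three resulting thresholds --- and the notational observation that the entropies in Definition~\ref{2.6} are per-symbol quantities, so it is the normalized sums, not the sequence PDFs, to which Lemma~\ref{2.2} is applied; one should also record the standing assumption that $H(X)$, $H(Y)$ and $H(X,Y)$ are finite, which is automatic when the underlying alphabet is finite.
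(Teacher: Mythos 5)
Your proof is correct and is essentially the argument the paper relies on: the paper does not prove this lemma itself but cites Cover and Thomas (pp.~195--197), whose proof is exactly your combination of the weak law of large numbers applied to the three normalized log-likelihood sums, a union bound for part~(1), and direct unpacking of the defining inequality for part~(2). No substantive difference to report.
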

\noindent\textit{Proof: see (\cite{r4}, pp. 195-197).}

\begin{lemma}[Conditional AEP]	\label{2.9}
	For any $\varepsilon  > 0$, if $ M $  is sufficiently large, then
	
	$ \left( 1\right)  $ 
	${2^{ - M\left( {H(Y|X) + 2\varepsilon } \right)}} < P({y}|x) < {2^{ - M\left( {H(Y|X) - 2\varepsilon } \right)}},$
	
	\indent\indent ${2^{ - M\left( {H(X|Y) + 2\varepsilon } \right)}} < P(x|{y}) < {2^{ - M\left( {H(X|Y) - 2\varepsilon } \right)}},  $ where $(X,Y) \in 	\mathbb{A}_\varepsilon ^{(M)}(X,Y) $.
	
	$ \left( 2\right)  $ Let $ 	\mathbb{A}_\varepsilon ^{(M)}(X|Y) = \left\{ {X:(X,Y) \in 	\mathbb{A}_\varepsilon ^{(M)}(X,Y)} \right\} $ be a set of all $ X^{M} $ that form jointly typical sequences with $ Y^{M} $, then 
	\begin{equation}
		\left( {1 - \varepsilon } \right){2^{M\left( {H(X|Y) - 2\varepsilon } \right)}} < \left\| {	\mathbb{A}_\varepsilon ^{(M)}(X|Y)} \right\| < {2^{M\left( {H(X|Y) + 2\varepsilon } \right)}}.
	\end{equation}
	
	$ \left( 3\right)  $ Let $ X^{M} $ be the jointly typical sequences with $ Y^{M} $, then
	\begin{equation}
		\Pr \left\lbrace  {{X^M} \in 	\mathbb{A}_\varepsilon ^{(M)}(X|Y)} \right\rbrace  > 1 - 2\varepsilon.
	\end{equation}
\end{lemma}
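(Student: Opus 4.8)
The plan is to read off all three parts from the membership conditions defining the jointly typical set (Definition~\ref{2.6}) together with the exponential estimates already established in Lemma~\ref{2.5} and Lemma~\ref{2.8}. For Part~(1), I would start from $P(y^M|x^M)=P(x^M,y^M)/P(x^M)$ and $P(x^M|y^M)=P(x^M,y^M)/P(y^M)$. When $(X^M,Y^M)\in\mathbb{A}_\varepsilon^{(M)}(X,Y)$, Lemma~\ref{2.8}(2) sandwiches $P(x^M,y^M)$ between $2^{-M[H(X,Y)+\varepsilon]}$ and $2^{-M[H(X,Y)-\varepsilon]}$, while the marginal conditions in Definition~\ref{2.6} sandwich $P(x^M)$ between $2^{-M[H(X)\pm\varepsilon]}$ and $P(y^M)$ between $2^{-M[H(Y)\pm\varepsilon]}$. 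Dividing the two pairs of estimates and using the chain rules $H(Y|X)=H(X,Y)-H(X)$ and $H(X|Y)=H(X,Y)-H(Y)$ yields the stated double inequalities, the exponent slack $2\varepsilon$ being the sum of the two individual $\varepsilon$'s.

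For Part~(2) I would handle the two bounds separately. The upper bound is a one-line counting estimate: from $1=\sum_{x^M}P(x^M|y^M)\ge\sum_{x^M\in\mathbb{A}_\varepsilon^{(M)}(X|Y)}P(x^M|y^M)$ and the fact that, by Part~(1), each term in the restricted sum is strictly larger than $2^{-M(H(X|Y)+2\varepsilon)}$, one gets $\|\mathbb{A}_\varepsilon^{(M)}(X|Y)\|<2^{M(H(X|Y)+2\varepsilon)}$. For the lower bound I would fix a $y^M$ for which the conditionally typical set carries almost all the conditional mass, i.e.\ $\sum_{x^M\in\mathbb{A}_\varepsilon^{(M)}(X|Y)}P(x^M|y^M)>1-\varepsilon$, and combine this with the upper exponential bound from Part~(1), namely $\sum_{x^M\in\mathbb{A}_\varepsilon^{(M)}(X|Y)}P(x^M|y^M)<\|\mathbb{A}_\varepsilon^{(M)}(X|Y)\|\,2^{-M(H(X|Y)-2\varepsilon)}$, to conclude $\|\mathbb{A}_\varepsilon^{(M)}(X|Y)\|>(1-\varepsilon)2^{M(H(X|Y)-2\varepsilon)}$. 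Part~(3) supplies exactly the abundance of such $y^M$: by the very definition of $\mathbb{A}_\varepsilon^{(M)}(X|Y)$, the event $\{X^M\in\mathbb{A}_\varepsilon^{(M)}(X|Y)\}$ coincides with $\{(X^M,Y^M)\in\mathbb{A}_\varepsilon^{(M)}(X,Y)\}$, so Lemma~\ref{2.8}(1) already gives probability at least $1-\varepsilon>1-2\varepsilon$, and the per-$y^M$ conditional form then follows by an averaging argument over $Y^M$ (Markov's inequality applied to $1-\Pr\{X^M\in\mathbb{A}_\varepsilon^{(M)}(X|Y)\mid Y^M=y^M\}$).

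I expect the only genuine obstacle to be this last step: turning the ``on average over $Y^M$'' guarantee that the joint AEP provides into a statement valid for an individual observed sequence $y^M$, as is needed both for the lower bound in Part~(2) and for the conditional reading of Part~(3). This requires passing to the high-probability subset of ``good'' $y^M$ (or invoking Markov's inequality), and one must then be slightly careful with the bookkeeping so that the resulting loss is still absorbed by the quoted constant $2\varepsilon$ rather than something like $\sqrt\varepsilon$; choosing the $\varepsilon$'s in Definition~\ref{2.6} and Lemma~\ref{2.8} appropriately (or shrinking $\varepsilon$ at the outset) settles this. Part~(1) and the cardinality upper bound, by contrast, are immediate substitutions once the definitions are unwound.
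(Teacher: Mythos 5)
Your argument is correct and is essentially the standard one: the paper itself gives no proof of this lemma, deferring entirely to the citation of Cover and Thomas (pp.~386--387), and your derivation --- dividing the joint-AEP bounds by the marginal bounds for Part~(1), the two counting estimates for Part~(2), and the identification of $\{X^M\in\mathbb{A}_\varepsilon^{(M)}(X|Y)\}$ with joint typicality for Part~(3) --- is precisely the proof given there. You also correctly flag the one real subtlety, namely that the lower bound in Part~(2) is, strictly speaking, an on-average (or high-probability-in-$y^M$) statement rather than a pointwise one, which the paper's phrasing glosses over; your Markov-inequality fix is the right way to handle it.
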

\noindent\textit{Proof: see (\cite{r4}, pp. 386-387).}

The typical sets and the conditional typical set given the observed sequence $ Y^{M} $ are as shown in Fig. 5. $ \left\| {{X^M}} \right\| $ and $ \left\| {{Y^M}} \right\| $ essentially denote the infinite sets contain all of the possible sequences of  $ X^{M} $  and $ Y^{M} $, respectively. Here, the two are considered finite sets. $\mathbb{A}_\varepsilon ^{(M)}\left( X \right) $ and $\mathbb{A}_\varepsilon ^{(M)}\left( {Y} \right) $ represent the typical sets contain all typical sequences of  $ X^{M} $  and $ Y^{M} $, respectively. The sequence  $ X^{M} $ is passed through $ p\left( {y}^{M}|x^{M}\right)  $ to the observer, who then make decisions $ {\hat{X} }^{M}$ based on the typical set test. Specifically, the typical set test finds  the $ {\hat{X} }^{M}$ that  forms the jointly typical sequence with $ {Y}^{M} $ given $ {Y}^{M} $. Since $ {\hat{X} }^{M}$ is not unique as illustrated in Fig. 5, the cardinal number of the typical set  $\mathbb{A}_\varepsilon ^{(M)}\left( { X|Y} \right) $ reflects the performance of the hypothesis testing. Therefore, unlike in Shannon's coding theorem, we cannot evaluate test results in terms of correct or not. Rather, successful and failed are used to assess the test results, and the test performance is measured only when the result is successful. Below, we give the two concepts of successful and failed test results.

\begin{figure*}
	\centering
	\includegraphics[width=3.5in]{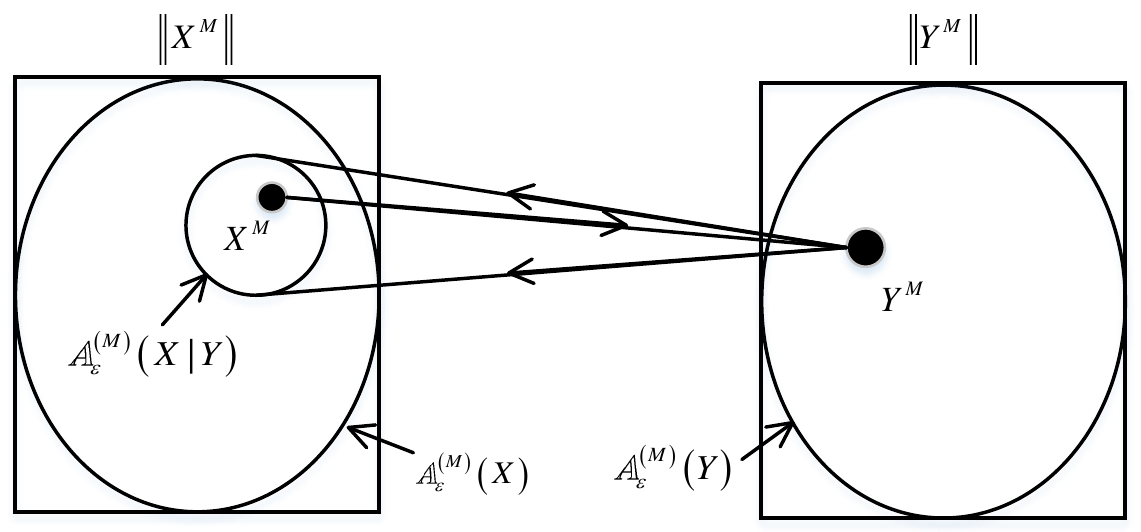}
	\begin{center}
		\caption{The typical sets and the conditional typical set for hypothesis testing}
	\end{center}
	\label{f1}
\end{figure*}

\begin{definition}[Probability of test failure]\label{2.10}
	If $ {\hat X^M} \in 	\mathbb{A}_\varepsilon ^{(M)}\left( { X|Y} \right) $, the test is said to be successful. Conversely, if $ {\hat X^M} \notin	\mathbb{A}_\varepsilon ^{(M)}\left( { X|Y} \right) $,  the test is said to be failed. The probability of test failure is defined as 
	\begin{equation}
		P_{f}^{\left( M\right) }=Pr\left\lbrace {\hat X^M} \notin	\mathbb{A}_\varepsilon ^{(M)}\left( { X|Y} \right)  \right\rbrace . 
	\end{equation}
\end{definition} 

It is noted that uncertainty remains after a successful test is made.  Define a binary random variable $ E $, where $ E=0 $ and $ E=1 $ represent the event of the successful and the failed tests, respectively. That is,

\begin{equation}
	E = \left\{ \begin{array}{l}
		0,{{\hat X}^M} \in  \mathbb{A}_\varepsilon ^{\left( M \right)}(X|Y),\\
		1,{{\hat X}^M} \notin \mathbb{A}_\varepsilon ^{\left( M \right)}(X|Y).
	\end{array} \right.
\end{equation} Hence, the probability of test failure is $P_f^{(M)} = \Pr \left\lbrace E = 1\right\rbrace $.

\begin{definition}[Empirical Entropy]\label{2.11}
	If the test is successful, i.e., $ E=0 $, the empirical entropy $ {\hat{H}}( X|Y)  $ of hypothesis testing is defined as 
	\begin{equation}
		{\hat{H}}( X|Y) = \frac{1}{M} {H}( X^{M}|Y^{M}, E=0 ).
	\end{equation} 
\end{definition} Note that the empirical entropy defined here is corresponding to specific test methods. The empirical entropy satisfies

\begin{equation}
	{\hat{H}}( X|Y) \le \frac{1}{M}\log  {\left\| {\mathbb{A}_\varepsilon ^{(M)}\left( {X|Y} \right)} \right\|} .
\end{equation}  The empirical entropy is regarded as a negative indicator of hypothesis testing. Correspondingly, below we give a definition of a positive indicator.

\begin{definition}[Accuracy]\label{2.12}
	If the test is successful, i.e., $ E=0 $, the degree of accuracy (accuracy for short) $ {\hat{I}}( X;Y) $  of hypothesis testing is defined as 
	\begin{equation}
		{\hat{I}}( X;Y)  =H\left( X \right)  - \hat{H}\left( X |Y\right) .
	\end{equation} 
\end{definition}  The accuracy $ {\hat{I}}( X;Y) $ is also called as the empirical TI. The definition of $ {\hat{I}}( X;Y) $ is rewritten as 
\begin{equation}
	\begin{array}{c}
		\begin{aligned}
			{\hat{I}}( X;Y) &\approx \frac{1}{M}{\log _2}\left\| {{2^{MH\left( X \right)}}} \right\| - \frac{1}{M}{\log _2}\left\| \mathbb{T} \right\|\\
			&= \frac{1}{M}{\log _2}\left( {\frac{{{2^{MH\left( X \right)}}}}{{\left\| \mathbb{T} \right\|}}} \right),  \label{29}
		\end{aligned}
	\end{array}
\end{equation}  where $ \mathbb{T} $ is a subset consist of  $ {\hat X^M} $. According to (\ref{29}), we will understand more intuitively the physical meaning of  accuracy $ {\hat{I}}( X;Y) $, which is of the following explanations:

(1) the only typical sequences are considered as the transmitted sequences according to the number $ {2^{MH\left( X \right)}} $ of the transmitted typical sequences in Lemma \ref{2.5}. (3). In the case of $ H(X) =1$, the number of typical sequences is ${2^M} $, i.e., all of the transmitted typical sequences are contained;

(2) the cardinality of the subset $ \mathbb{T} $  reflects the accuracy of hypothesis testing. The smaller the cardinality, the higher the  accuracy. In the case of ${\left\| \mathbb{T} \right\|}=1 $, the test?s result is the transmitted sequence,  resulting in the  highest  accuracy $ {\hat{I}}( X;Y) = H\left( X \right) $;

(3) the subset $ \mathbb{T} $   divides the typical set $ \mathbb{A}_\varepsilon ^{(M)} $  into   $  {\frac{{{2^{MH\left( X \right)}}}}{{\left\| \mathbb{T} \right\|}}} $  subsets of equal cardinality, and the sequences of length  $ {\log _2}\left( {\frac{{{2^{MH\left( X \right)}}}}{{\left\| \mathbb{T} \right\|}}} \right) $ bits are required to mark these subsets,  which are converted to $ {\hat{I}}( X;Y)  $ bits for each test. The length of the sequences is the  accuracy of the hypothesis test.

\begin{definition}[Achievability]\label{2.13}
	The accuracy is said to be achievable if there exists a test method such that the probability of test failure $ P_{f} $ tends to 0 as $ M \to \infty $.
\end{definition}

\begin{theorem}[Test Theorem]\label{2.14}
	If the TI of the hypothesis testing system $ \left( \mathbb{X}^{M}, \pi(x^{M}), p(y^{M}|x^{M}), \mathbb{Y}^{M} \right)  $ , then all the accuracy $ {\hat{I}}( X;Y) $ less than $ I(Y;X) $ are achievable. Specifically, if $ M $ is large enough, for any  $\varepsilon  > 0$, there is a test method whose accuracy $ {\hat{I}}( X;Y) $ satisfies	
	\begin{equation}
		{\hat{I}}( X;Y)  >	I(X;Y) - 2\varepsilon  
	\end{equation} and the probability of test failure is $ P_f^{(M)} \to 0 $. Conversely, there is no test method whose accuracy $ {\hat{I}}( X;Y) $ is greater than the TI given $ P_f^{(M)} \to 0 $.	
\end{theorem}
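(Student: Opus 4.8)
The plan is to follow the architecture of Shannon's channel coding theorem. For the \emph{achievability} part I will exhibit a concrete test on the $M$-extension --- the SAP test of Section~\ref{sec2} --- which plays the role that random coding plays in the coding theorem, and show its probability of test failure vanishes while its accuracy exceeds $I(X;Y)-2\varepsilon$. For the \emph{converse} part I will show, by a data-processing argument on the Markov chain $X^M\to Y^M\to\hat X^M$ together with the Conditional AEP, that no decision rule can push the accuracy above $I(X;Y)$ in the limit. Because the $M$-extension has product form, Lemmas~\ref{2.5}, \ref{2.8} and \ref{2.9} all hold once $M$ is taken large enough; I will fix one $\varepsilon$, invoke all three at once, carry the $2\varepsilon$ slack of the Conditional AEP through the computation, and only let $\varepsilon\downarrow 0$ at the very end.

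\emph{Achievability.} Run the SAP test on the extension: given $Y^M$, draw $\hat X^M\sim P(x^M|y^M)$. By Lemma~\ref{2.7} the joint law of $(\hat X^M,Y^M)$ equals the true joint law $P(x^M,y^M)$, so the Joint AEP (Lemma~\ref{2.8}(1)) gives $\Pr\{(\hat X^M,Y^M)\in\mathbb{A}_\varepsilon^{(M)}(X,Y)\}\ge 1-\varepsilon$. Since the event $\hat X^M\in\mathbb{A}_\varepsilon^{(M)}(X|Y)$ is precisely the event $(\hat X^M,Y^M)\in\mathbb{A}_\varepsilon^{(M)}(X,Y)$, this forces $P_f^{(M)}\to 0$. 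On the success event $E=0$ the test output lies in $\mathbb{A}_\varepsilon^{(M)}(X|Y)$, whose cardinality is at most $2^{M(H(X|Y)+2\varepsilon)}$ by Lemma~\ref{2.9}(2); hence $\hat H(X|Y)\le\frac1M\log\|\mathbb{A}_\varepsilon^{(M)}(X|Y)\|<H(X|Y)+2\varepsilon$, and therefore $\hat I(X;Y)=H(X)-\hat H(X|Y)>H(X)-H(X|Y)-2\varepsilon=I(X;Y)-2\varepsilon$, which is the asserted bound.

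\emph{Converse.} Suppose a test achieves $P_f^{(M)}\to 0$. The data-processing inequality along $X^M\to Y^M\to\hat X^M$ gives $I(X^M;\hat X^M)\le I(X^M;Y^M)=M\,I(X;Y)$, so the information any decision rule retains about the true state is at most $M$ times the TI. I then relate $\hat H(X|Y)=\frac1M H(X^M|Y^M,E=0)$ to this bound: since $H(X^M|Y^M)=M H(X|Y)$ independently of the rule, and conditioning on the binary variable $E$ costs at most $H(E)\le 1$ bit while the failure branch $E=1$ (probability $P_f^{(M)}\to 0$) contributes at most $P_f^{(M)}H(X)$ per symbol, one obtains $\hat H(X|Y)\ge H(X|Y)-\frac1M-P_f^{(M)}H(X)$, hence $\hat I(X;Y)\le I(X;Y)+o(1)$. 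Equivalently, by Lemma~\ref{2.9}(1),(3) the true a posteriori PDF is within a factor $2^{\pm 2M\varepsilon}$ of uniform on $\mathbb{A}_\varepsilon^{(M)}(X|Y)$ and the true state lies in that set with probability $\ge 1-2\varepsilon$, so the residual uncertainty about $X^M$ given the observation cannot be reduced below $M(H(X|Y)-2\varepsilon)$ by any rule. Letting $M\to\infty$ and $\varepsilon\downarrow 0$ yields $\hat I(X;Y)\le I(X;Y)$, i.e.\ no test exceeds the TI.

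\emph{Main obstacle.} Achievability is the routine direction: once Lemmas~\ref{2.7}--\ref{2.9} are in place it is a short chain of inequalities, with Lemma~\ref{2.7} doing the work of showing the SAP test reproduces the correct joint statistics. The delicate part is the converse --- in particular, making precise the sense in which the empirical a posteriori entropy of an \emph{arbitrary} test is bounded below, and controlling the loss incurred by conditioning on the high-probability success event $E=0$ so that all correction terms are genuinely $o(1)$ uniformly in the rule. A secondary bookkeeping point is keeping the two $\varepsilon$-slacks consistent (the $\varepsilon$ of the Joint AEP versus the $2\varepsilon$ of the Conditional AEP), which is why the final accuracy guarantee appears with a $2\varepsilon$ rather than an $\varepsilon$.
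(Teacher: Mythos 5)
Your proposal follows essentially the same route as the paper: achievability via the SAP test combined with the joint AEP and the cardinality bounds on $\mathbb{A}_\varepsilon^{(M)}(X|Y)$ from Lemma \ref{2.9}, and the converse via an extended Fano-type decomposition of $H(X^M,E|Y^M)$ with the failure branch bounded by $M[H(X)+\varepsilon]$ and the slack terms vanishing as $P_f^{(M)}\to 0$. The only cosmetic differences are that the paper bounds $P_f^{(M)}\le 2\varepsilon$ by a union bound over the two non-typicality events rather than a single application of Lemma \ref{2.8}(1), and routes the converse algebra through $H(X^M)=H(X^M|Y^M)+I(X^M;Y^M)$ instead of directly through $H(X^M|Y^M)=MH(X|Y)$; these are equivalent.
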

\noindent\textit{Proof: See Appendix \ref{appA} ( achievability ) and Appendix \ref{appB} ( converse to the achievability ).}

The idea of the test theorem is to determine in which typical set $\mathbb{A}_\varepsilon ^{(M)}\left( {X|Y} \right) $  the decision sequence $ \hat{X}^{M} $ is contained. For each observed sequence $ Y^{M} $, the cardinal number of the typical set  $\mathbb{A}_\varepsilon ^{(M)}\left( {X|Y} \right) $  is $ \approx {2^{MH\left( {X|Y} \right)}} $. 
Then the \textit{a posteriori} entropy of every symbol is $\approx {{\log {2^{MH\left( {X|Y} \right)}}} \mathord{\left/
		{\vphantom {{\log {2^{Mh\left( {X|Y} \right)}}} M}} \right.
		\kern-\nulldelimiterspace} M} = H\left( {X|Y} \right) $. The cardinal number of  typical set $\mathbb{A}_\varepsilon ^{(M)}\left( {X} \right) $ is  $ \approx  {2^{MH\left( X \right)}} $. Then the \textit{a priori} entropy of every symbol is $\approx {{\log {2^{MH\left( {X} \right)}}} \mathord{\left/
		{\vphantom {{\log {2^{MH\left( {X} \right)}}} }} \right.
		\kern-\nulldelimiterspace} M} = H\left( {X} \right) $.  As a result, the TI of each symbol obtained from $ Y^{M} $ is $ H\left( X\right) - H\left( {X|Y} \right) = I\left( {X;Y} \right) $. In other words, the input typical set has to be divided into subsets of number  $ {2^{M\left[ {H\left( X \right) - H\left( {X|Y} \right)} \right]}} = {2^{MI\left( {X;Y} \right)}} $ and each subset is marked with $ MI\left( {X;Y} \right) $ bit. Therefore, the TI of  $ MI\left( {X;Y} \right) $ bit is used to determine a typical set, which is converted to the TI of  $ I\left( {X;Y} \right) $ per test.

\section{Simulation}
In this section, numerical simulations are based on the example of a coin toss in the introduction for $ \theta $ is $ 0.4 $. 

Figure 6 shows the performance of the SAP test. It can be seen that the TI and the accuracy $ \hat{I}(X;Y)$  grow larger as the maximum $ N $ of the number of coins tosses $ n $ increases. This reason is that the more the number of elements in a set, the more the amount of information it can carry. Furthermore, as can be seen, there is convergence in the accuracy $ \hat{I}(X;Y)$ of the SAP test as the extension $ M $ increases. When the extension $ M $ is 10, the accuracy $ \hat{I}(X;Y)$ almost coincides with the TI. In summary, the SAP test approaches the theoretical performance as long as sufficient extension  $ M $ is provided. 
\begin{figure*}
	\centering
	\includegraphics[width=3.5in]{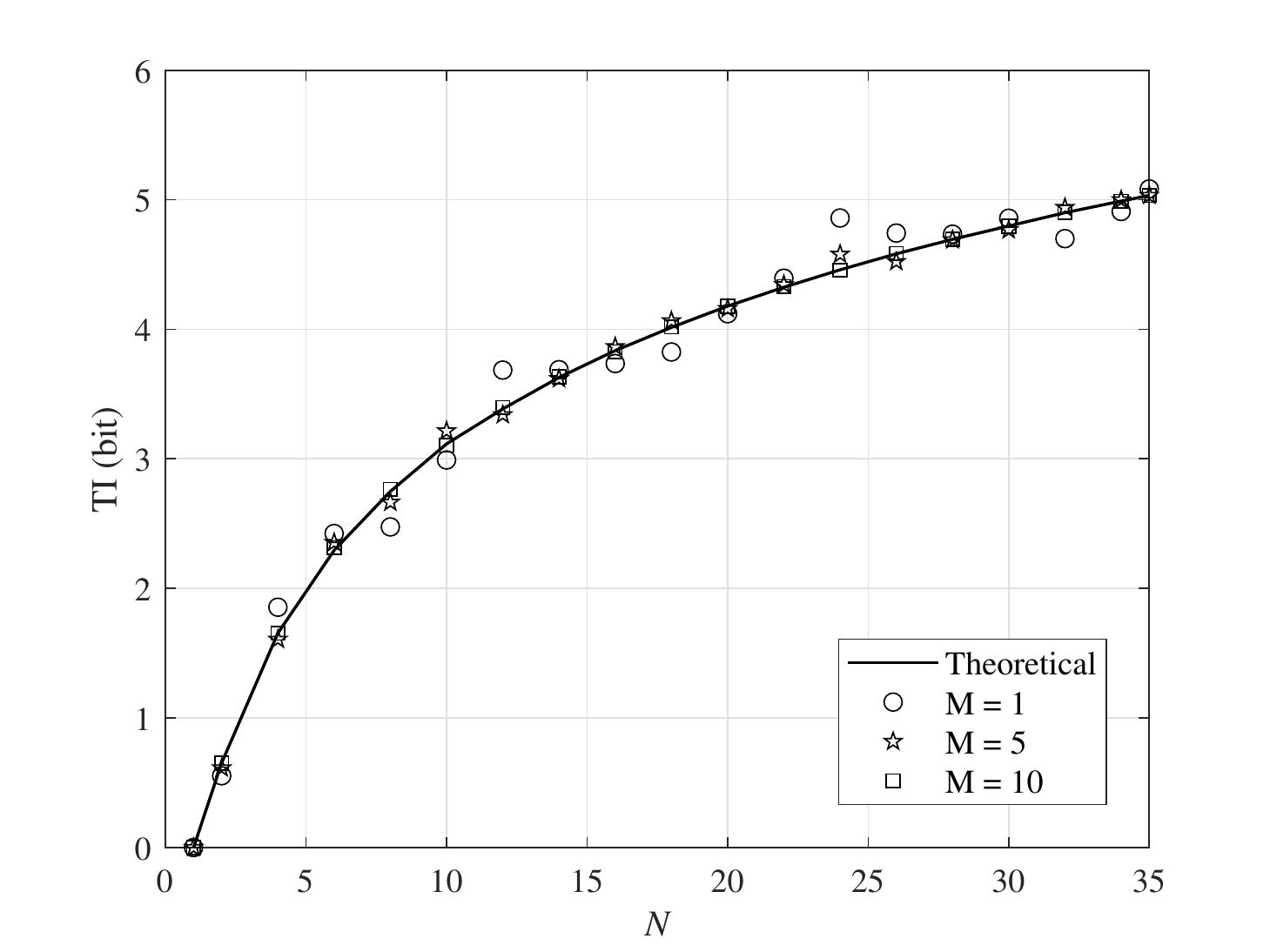}
	\caption{The convergence of the accuracy $ \hat{I}(X;Y)$ of the SAP test with the extension $ M $ increases.}
	\label{penG}
\end{figure*}

Figure 7 illustrates the comparison between the TI and the accuracy of the SAP, MAP, MeAP, and EAP tests.  It can be seen that the curves of the accuracy  of these four tests do lie below the TI curve. Moreover, the SAP test outperforms the MAP, MeAP, and EAP tests, and combined with Fig. 6, implies that the SAP test is asymptotically optimal.
\begin{figure*}
	\centering
	\includegraphics[width=3.5in]{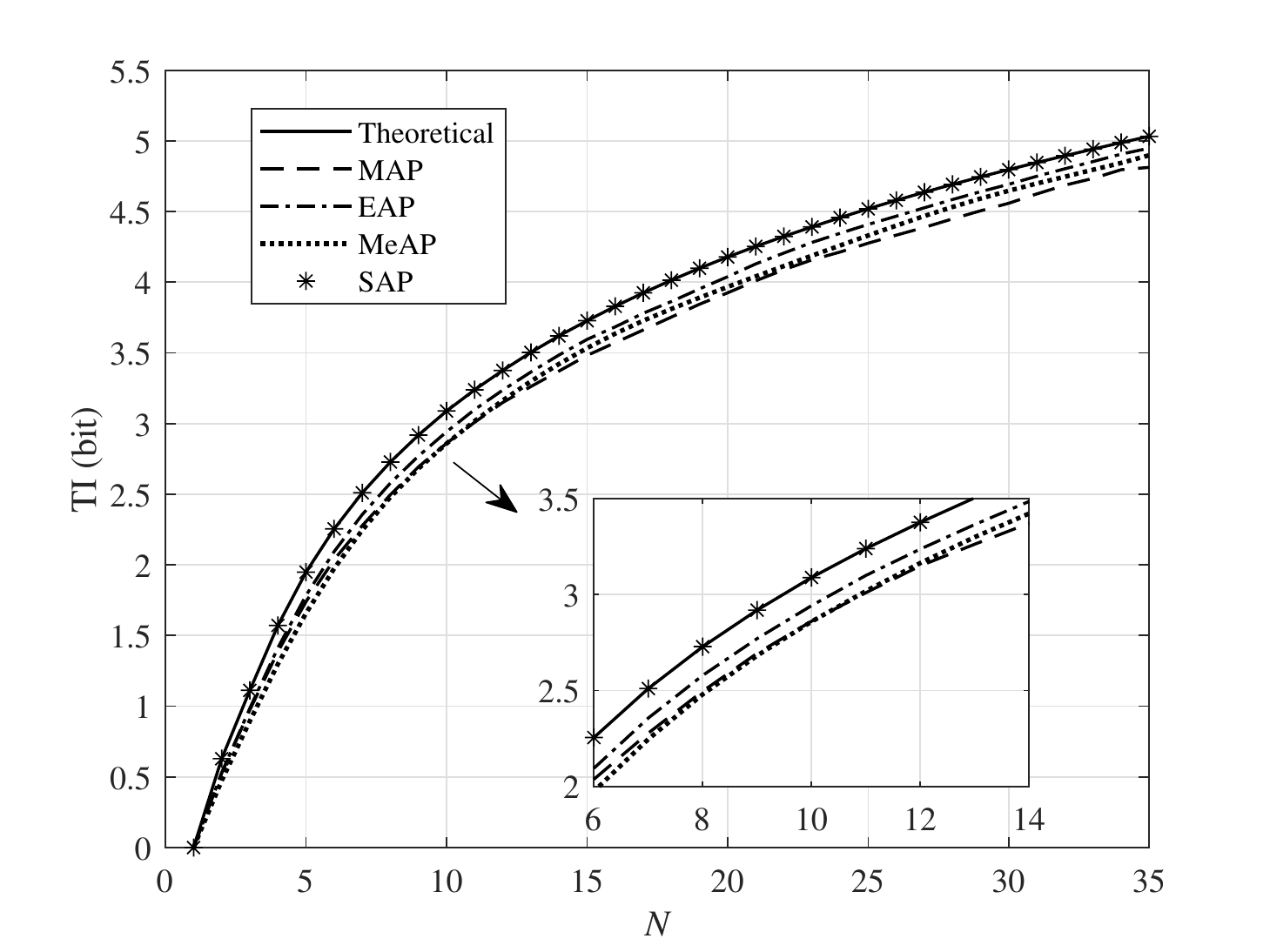}
	\caption{The accuracy $ \hat{I}(X;Y)$ of the SAP, MAP, EAP, and MeAP tests for 10 extensions are compared to the TI.}
	\label{penG}
\end{figure*}

\section{Conclusion}
The problem of optimal hypothesis testing is investigated by information theory. It is established that an information-theoretic framework of hypothesis testing consists of TI,  accuracy, SAP test, probability of test failure, and test theorem. In addition to being a significant branch of statistics, hypothesis testing is also widely applied to various fields including signal processing, psychology, and ecology. However, a critical flaw in the existing body of hypothesis testing theory is the lack of a unified evaluation indicator independent of specific test methods, leading to no optimal hypothesis testing among various test methods. The theoretical framework developed in this paper can be used to reconstruct a novel theory of hypothesis testing to promote the development of statistics and its application in related fields.

\backmatter

\section*{Declarations}

\begin{itemize}
\item Funding: this work was supported by the National Natural Science Foundation of China under Grants 62271254.
\item Competing interests: The authors have no relevant financial or non-financial interests to disclose.
\item Ethics approval: not applicable.
\item Consent to participate: informed consent was obtained from all individual participants included in the study.
\item Consent for publication: the authors agree to publish in Statistics Paper.
\item Availability of data and materials: not applicable.
\item Code availability: not applicable. 
\item Authors' contributions: Dazhuan Xu and Nan Wang performed the proof of theorem, validation, data analysis and writing.
\end{itemize}

\begin{appendices}

	\section{Proof of the target detection theorem}\label{appA}
Fix $\pi \left( x \right) $. Generate the extended state sequence  $ x^{M} $ according to the distribution,
\begin{equation}
	\pi \left( {{x^M}} \right) = \prod\limits_{m= 1}^M {\pi \left( {{x_m}} \right)}. 
\end{equation} 

The $ M $ extended conditional PDF is
\begin{equation}
	P\left( {{{y}^M}\left| {{x^M}} \right.} \right) = \prod\limits_{m = 1}^M {P\left( {{{y}_m}\left| {{x_m}} \right.} \right)} . 
\end{equation} 

Assuming that  $ P({y}^{M}|{x}^{M}) $ and the \textit{a priori} distribution  $\pi \left( x^{M} \right) $ are known, the \textit{a posteriori}  PDF is calculated by
\begin{equation}
	P(x|{y}) = \frac{{\pi (x)P({y}
			|x)}}{{\sum\limits_x {\pi (x)P({y}|x)} }}. 
\end{equation} According to Lemma \ref{2.7}, $ {\hat X^M} $ and $ {Y}^{M} $ are jointly typical sequences and the SAP test is belong to the typical set test method. By Lemma \ref{2.9}. (2), the typical set $ \mathbb{A}_\varepsilon ^M(X|Y) $ of satisfies 
\begin{equation}
	\left( {1 - \varepsilon } \right){2^{M\left( {H(X|Y) - 2\varepsilon } \right)}} < \left\| {\mathbb{A}_\varepsilon ^M( X|Y)} \right\| < {2^{M\left( {H(X|Y) + 2\varepsilon } \right)}}. 
\end{equation}

If the decision is successful, i.e.,  $ E=0 $, by lemma \ref{2.9}.(2), the empirical entropy satisfies 
\begin{equation}
	H(X|Y) - 2\varepsilon  + \frac{1}{M}\log \left( {1 - \varepsilon } \right)	< \hat{H}(X|Y) < H(X|Y) + 2\varepsilon. 
\end{equation} when $ M $ is sufficiently large, the accuracy $ {\hat{I}}( X|Y) $ satisfies
\begin{equation}
	I(X;Y) - 2\varepsilon < {\hat{I}}( X;Y)  < I(X;Y) + 2\varepsilon. 
\end{equation} The achievability of the accuracy $ {\hat{I}}( X;Y) $ is proved.

There are two events that cause the failure for the typical set test. The first is that $ X^{M} $ and $ Y^{M} $ do not form jointly typical sequences, denoted by $ {\bar A_T} $. The second is that $ \hat{X}^{M} $ and $ {Y}^{M} $  do not form jointly typical sequences, denoted by $ {\bar A_R} $. Then, the probability of test failure is 
\begin{equation}
	\begin{array}{c} 
		\begin{aligned}
			P_f^{(M)} &= \Pr \left( {{{\bar A}_T} \cup {{\bar A}_R}} \right)\\
			&\le \Pr \left( {{{\bar A}_T}} \right) + \Pr \left( {{{\bar A}_R}} \right).
		\end{aligned}
		
	\end{array}
\end{equation} According to the lemma \ref{2.9}.(3), we have
\begin{equation}
	P_f^{(M)} \le 2\varepsilon, 
\end{equation} and $P_f^{(M)} $ converges to zero as $ M $ increases.

\section{Proof of the converse theorem to the achievability}\label{appB}
\subsection{Extended Fano's Inequality}

Firstly, we provide a Lemma of extending Fano's inequality to the hypothesis testing in order to prove the converse theorem to the achievability. Focus on the conditional entropy $ H\left( {{X^M}, E|{{Y}^M}} \right) $. According to the chain rule for entropy, we have
\begin{equation}
	H\left( {{X^M},E|{Y}^M} \right) = H\left( {E|{{Y}^M}} \right) + H\left( {{X^M}|{{Y}^M},E} \right). \label{39}
\end{equation} It is obvious that $ H\left( {E|{{Y}^M}} \right) < 1 $. The remaining term $ H\left( {{X^M}|{{Y}^M},E} \right)$ can be expressed as 
\begin{equation}
	H\left( {{X^M}|{Y}^M},E \right) = \left( {1 - P_f^{(M)}} \right)H\left( {{X^M}|{{Y}^M},E = 0} \right) + P_f^{(M)}H\left( {{X^M}|{Y}^M},E = 1 \right),\label{40}
\end{equation} where $ H\left( {{X^M}|{{Y}^M},E = 0} \right)  $ denotes the uncertainty when the test is successful.

According to the property of typical set, we have
\begin{equation}
	\begin{array}{c}
		\begin{aligned}
			H\left( {{X^M}|{{Y}^M},E = 0} \right) 
			&\mathop  \le \limits^{\left( a \right)} \log \left\| {\mathbb{A}_\varepsilon ^{(M)}\left( {X|{Y}} \right)} \right\|\\
			&\mathop  \le \limits^{\left( b \right)} \log {2^{M\left[ {H\left( {X|{Y}} \right) + 2\varepsilon } \right]}}\\
			&= M\left[ {H\left( {X|{Y}} \right) + 2\varepsilon } \right],
		\end{aligned}
	\end{array}
\end{equation} where $ \left( a \right) $ is gained by the maximum discrete entropy theorem and $ \left( b \right) $ is resorted to lemma \ref{2.8}.(2). Similarly,
\begin{equation}
	\begin{array}{c}
		\begin{aligned}
			H\left( {{X^M}|{{Y}^M},E = 1} \right)&\le \log \left( {\left\| {\mathbb{A}_\varepsilon ^M(X)} \right\| - \left\| {\mathbb{A}_\varepsilon ^M(X|{Y})} \right\|} \right)\\
			&\le \log \left( {{2^{M\left[ {H\left( X \right) + \varepsilon } \right]}} - {2^{M\left[ {H\left( {X|{Y}} \right) - 2\varepsilon } \right]}}} \right)\\
			&\le \log {2^{M\left[ {H\left( X \right) + \varepsilon } \right]}}\\
			&= M\left[ {H\left( X\right) + \varepsilon } \right]. \label{42}
		\end{aligned}
	\end{array}
\end{equation} Therefore, we have the following lemma.

\begin{lemma}[Extended Fano's Inequality]\label{B.1}
	\begin{equation}
		H\left( {{X^M},E|{{Y}^M}} \right) \le 1 + \left( {1 - P_f^{(M)}} \right)H\left( {{X^M}|{{Y}^M},E = 0} \right) + P_f^{(M)}M\left[ {H\left( X \right) + \varepsilon } \right].
	\end{equation}
\end{lemma}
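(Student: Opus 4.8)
The plan is to transplant the classical proof of Fano's inequality into this setting, with the error event replaced by the test-failure indicator $E$ and with the residual uncertainty of a \emph{successful} test retained rather than set to zero. First I would apply the chain rule for entropy in the form
\begin{equation}
	H\left( {{X^M},E|{{Y}^M}} \right) = H\left( {E|{{Y}^M}} \right) + H\left( {{X^M}|{{Y}^M},E} \right) ,
\end{equation}
and observe that, because $E$ takes only the values $0$ and $1$, $H\left( {E|{{Y}^M}} \right) \le H(E) \le 1$; this is the source of the additive constant $1$ on the right-hand side of the lemma.

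Next I would split the remaining term by conditioning on the value of $E$, using $\Pr\{E=0\} = 1 - P_f^{(M)}$ and $\Pr\{E=1\} = P_f^{(M)}$, to get
\begin{equation}
	H\left( {{X^M}|{{Y}^M},E} \right) = \left(1 - P_f^{(M)}\right) H\left( {{X^M}|{{Y}^M},E=0} \right) + P_f^{(M)} H\left( {{X^M}|{{Y}^M},E=1} \right) .
\end{equation}
The first summand I would leave intact, since it is exactly the quantity (up to the normalization of Definition~\ref{2.11}) that the converse argument goes on to control; all the remaining work is in the second summand.

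This is the step I expect to be the crux. I would argue that, conditioned on $Y^M$, the relevant state sequences range over the typical set $\mathbb{A}_\varepsilon^{(M)}(X)$, while a failed test forces the decision outside the conditional typical set $\mathbb{A}_\varepsilon^{(M)}(X|Y)$, so that by the maximum discrete entropy theorem and Lemma~\ref{2.5}.(3),
\begin{equation}
	H\left( {{X^M}|{{Y}^M},E=1} \right) \le \log\left( \left\| {\mathbb{A}_\varepsilon^{(M)}(X)} \right\| - \left\| {\mathbb{A}_\varepsilon^{(M)}(X|Y)} \right\| \right) \le \log\left\| {\mathbb{A}_\varepsilon^{(M)}(X)} \right\| \le M\left[ H(X) + \varepsilon \right] .
\end{equation}
The delicate point is making the first inequality rigorous: strictly, the conditional law of $X^M$ given $(Y^M, E=1)$ need not be supported inside $\mathbb{A}_\varepsilon^{(M)}(X)$, but the atypical complement has probability at most $\varepsilon$ for $M$ large by Lemma~\ref{2.5}.(1), so I would show explicitly that its contribution is absorbed into the $\varepsilon$ term rather than leave it implicit.

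Finally I would substitute both bounds back into the chain-rule identity, taking care to keep the factor $1 - P_f^{(M)}$ in front of $H(X^M|Y^M,E=0)$ (this coefficient, not simply $1$, is what the subsequent converse argument uses), which gives
\begin{equation}
	H\left( {{X^M},E|{{Y}^M}} \right) \le 1 + \left(1 - P_f^{(M)}\right) H\left( {{X^M}|{{Y}^M},E=0} \right) + P_f^{(M)} M\left[ H(X) + \varepsilon \right] ,
\end{equation}
which is the claim of Lemma~\ref{B.1}.
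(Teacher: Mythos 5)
Your proposal follows essentially the same route as the paper: the chain rule $H(X^M,E|Y^M)=H(E|Y^M)+H(X^M|Y^M,E)$, the bound $H(E|Y^M)\le 1$, the split on $E$, and the cardinality bound $H(X^M|Y^M,E=1)\le \log\left( \left\| \mathbb{A}_\varepsilon^{(M)}(X) \right\| - \left\| \mathbb{A}_\varepsilon^{(M)}(X|Y) \right\| \right)\le M\left[ H(X)+\varepsilon \right]$. Your added caveat about the conditional law of $X^M$ given $E=1$ not being supported inside the typical set is a fair point of rigor that the paper itself leaves implicit, but it does not change the argument.
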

\begin{proof}
	According to (\ref{39}) and (\ref{40}), we have
	\begin{equation}
		H\left( {{X^M},E|{{Y}^M}} \right)= 1 + \left( {1 - P_f^{(M)}} \right)H\left( {{X^M}|{{Y}^M},E = 0} \right) + P_f^{(M)}H\left( {{X^M}|{Y}^M},E = 1 \right).
	\end{equation}
	By (\ref{42}), we have
	\begin{equation}
		H\left( {{X^M},E|{Y^M}} \right) \le 1 + \left( {1 - P_f^{(M)}} \right)H\left( {{X^M}|{Y^M},E = 0} \right) + P_f^{(M)}M\left[ {H\left( X \right) + \varepsilon } \right].
	\end{equation}
\end{proof}	

\subsection{Converse to The Test Theorem}
Fano's inequality has been extended to hypothesis testing. Then, the proof of the converse to the test theorem is provided based on Lemma \ref{B.1}. According to the properties of entropy and mutual information, we have
\begin{equation}
	H\left( {{X^M}} \right) = H\left( {{X^M}|{{Y}^M}} \right) + I\left( {{X^M};{{Y}^M}} \right), \label{46}
\end{equation} where $ H\left( {{X^M}} \right) = MH\left( X \right) $. By the property of the extension\cite{r3}, we have
\begin{equation}
	I\left( {{X^M};{{Y}^M}} \right) \le MI\left( {X;Y} \right). \label{47}
\end{equation} In light of (\ref{46}), (\ref{47}), and lemma \ref{B.1}, we have
\begin{equation}
	H\left( X \right) \le \frac{1}{M} + \frac{1}{M} \left( {1 - P_f^{(M)}} \right)H\left( {{X^M}|{Y^M},E = 0} \right) +\frac{1}{M} P_f^{(M)}M\left[ {H\left( X \right) + \varepsilon } \right] + I\left( {X;Y} \right).
\end{equation} According to the definition of the empirical entropy, we have
\begin{equation}
	H\left( X \right) - \frac{1}{M}H\left( {{X^M}|{Y^M},E = 0} \right) \le \frac{1}{M} - \frac{1}{M} P_f^{(M)}H\left( {{X^M}|{Y^M},E = 0} \right) + \frac{1}{M} P_f^{(M)}\left[ {H\left( X \right) + \varepsilon } \right] + I\left( {X;Y} \right). \label{49}
\end{equation} According to the definition of the accuracy $ {\hat{I}}( X;Y) $, (\ref{49}) is rewritten as
\begin{equation}
	{\hat{I}}( X;Y) \le \frac{1}{M} -  \frac{1}{M} P_f^{(M)}H\left( {{X^M}|{Y^M},E = 0} \right) + \frac{1}{M} P_f^{(M)}\left[ {H\left( X \right) + \varepsilon } \right] + I\left( {X;Y} \right). \label{50}
\end{equation}  If $ M \to \infty ,P_f^{(M)} \to 0 $, (\ref{50}) is expressed as
\begin{equation}
	{\hat{I}}( X;Y) < I(X;Y).
\end{equation} 




\end{appendices}


\bibliography{sn-bibliography}

\end{document}